\newcommand{\re}{\mathbb{R}}
\newcommand{\co}{\mathbb{C}}
\newcommand{\na}{\mathbb{N}}
\newcommand{\cc}{\mathcal{C}}
\newcommand{\LL}{\mathcal{L}}
\newcommand{\z}{\bar z}
\newcommand{\tz}{\tilde z}
\long\def\symbolfootnote[#1]#2{\begingroup%
\def\thefootnote{\fnsymbol{footnote}}\footnote[#1]{#2}\endgroup}
\newtheorem{thm}{Theorem}[section]
\newtheorem{lem}[thm]{Lemma}
\theoremstyle{definition}
\newtheorem{example}[thm]{Example}
\theoremstyle{remark}
\newtheorem{rem}[thm]{Remark}
\title{Smooth counterexamples to strong unique continuation for a
  Beltrami system in $\mathbb{C}^2$}
\author{Adam Coffman \\ Yifei Pan}
\begin{document}

\maketitle

\begin{abstract}
We construct an example of a smooth map $\mathbb{C}\to\mathbb{C}^2$
which vanishes to infinite order at the origin, and such that the
ratio of the norm of the $\bar z$ derivative to the norm of the $z$
derivative also vanishes to infinite order.  This gives a
counterexample to strong unique continuation for a vector valued
analogue of the Beltrami equation.
\end{abstract}
  
\section{Introduction}\label{sec0}\symbolfootnote[0]{MSC 2010: 35A02 (Primary); 32W50, 35J46, 35R45 (Secondary)}

We will construct an example of a smooth function ${\bf
  u}:\co\to\co^2$ which has an isolated zero of infinite order at the
origin ($\|z^{-k}{\bf u}(z)\|\to0$ as $z\to0$ for all $k\ge0$), and
where the ratio of norms of derivatives ${\|{\bf u}_{\z}\|}/{\|{\bf
    u}_{z}\|}$ is small, also vanishing to infinite order at $z=0$.
This behavior is obviously different from that of a map ${\bf u}$ with
${\bf u}_{\z}\equiv{\bf0}$, which would be holomorphic and could not
have an isolated zero of infinite order.  This vector valued case is
also different from the complex scalar case, where solutions
$u:\co\to\co$ of the well-known Beltrami equation $u_{\z}=a(z)u_z$,
for small $a(z)$, also cannot vanish to infinite order at an isolated
zero (\cite{b}, \cite{ch}, \cite{aim}, \cite{rosay}).

More precisely, we will show in Section \ref{sec3} that in a
neighborhood of the origin, ${\bf u}(z)$ is a solution of a
Beltrami-type system of differential equations, which is linear,
elliptic, and has continuous coefficients very close to those of the
Cauchy-Riemann system, but does not have the property of strong unique
continuation.

The construction was motivated by an example of Rosay (\cite{rosay})
and questions posed by \cite{is}, who were considering the unique
continuation problem for systems of equations from almost complex
geometry.
  
In Section \ref{sec1}, we develop a general framework for constructing
smooth maps $\co\to\co^2$ vanishing to infinite order.  In Section
\ref{sec2}, we present both Rosay's example and our new example.  In
Section \ref{sec4} we state some open questions.

\section{General Setup}\label{sec1}

\subsection{Annular cutoff functions}

Start with a real valued function $s(x)$ which is smooth on $\re$,
with $s\equiv0$ on $[0,\frac14]$, $s$ increasing on
$[\frac14,\frac34]$, $s(\frac12)=\frac12$, $s^\prime(\frac12)=2$,
$s^{\prime\prime}(\frac12)=0$, and $s\equiv1$ on $[\frac34,1]$.

For $r_1>0$ and two parameters $0<r<r_1$ and $0<\Delta r<r_1-r$,
denote the annulus $A_{r,\Delta r}=\{z=x+iy\in\co:r\le|z|\le r+\Delta
r\}$ (contained in the disk $D_{r_1}$), and define a family of
functions $\chi_{r,\Delta r}:A_{r,\Delta r}\to\re$ by the formula
$\chi_{r,\Delta r}(z)=s\left(\frac{|z|-r}{\Delta r}\right)$.  At a
particular point $\tz=(\tilde x,\tilde y)\in A_{r,\Delta r}$,
\begin{eqnarray*}
  \frac{\partial}{\partial x}\left[\chi_{r,\Delta
    r}(x,y)\right]_{(\tilde x,\tilde y)}&=&\frac\partial{\partial
    x}\left[s\left(\frac{\sqrt{x^2+y^2}-r}{\Delta
    r}\right)\right]_{(\tilde x,\tilde y)}\\
    &=&s^\prime\left(\frac{\sqrt{\tilde x^2+\tilde y^2}-r}{\Delta
    r}\right)\cdot\frac{\tilde x}{\sqrt{\tilde x^2+\tilde
    y^2}}\cdot\frac1{\Delta r}\\
    &=&s^\prime\left(\frac{|\tz|-r}{\Delta r}\right)\cdot\frac{\tilde
    x}{|\tz|\Delta r}.
\end{eqnarray*}
The $y$ derivative is similar, and the $z$, $\z$ derivatives are
complex linear combinations.  In particular,
\begin{eqnarray}
  \frac{\partial}{\partial\z}\chi_{r,\Delta
    r}(z)&=&\overline{\frac{\partial}{\partial z}\chi_{r,\Delta
    r}(z)}=s^\prime\left(\frac{|z|-r}{\Delta
    r}\right)\cdot\frac{x+iy}{2|z|\Delta
    r}\label{eq0}\\ \implies\left|\frac{\partial}{\partial\z}\chi_{r,\Delta
    r}(z)\right|&=&\left|\frac{\partial}{\partial z}\chi_{r,\Delta
    r}(z)\right|\le\frac{m_{01}}{\Delta r}\label{eq-1}
\end{eqnarray}
for some constant $m_{01}>0$ not depending on $r_1$, $r$ or $\Delta
r$.

For higher derivatives of $\chi_{r,\Delta r}$, the following Lemma is
a simplified version of the Fa\`a di Bruno formula for derivatives of
composites.
\begin{lem}\label{lem1.1}
  For $k\ge0$, there exist polynomials $p_{abc}(x_1,x_2,x_3)$,
  $q_{abc}(x_1,x_2,x_3)$ indexed by $a,b,c\ge0$, $a+b=k$, $c\le k$,
  with constant complex coefficients (not depending on $r_1$, $r$,
  $\Delta r$, or $s$), so that
  $$\frac{\partial^a}{\partial
    z^a}\frac{\partial^b}{\partial\z^b}\chi_{r,\Delta
    r}(z)=\sum_{c=0}^ks^{(c)}\left(\frac{|z|-r}{\Delta
    r}\right)\cdot\frac{p_{abc}(z,\z,\Delta
    r)+|z|q_{abc}(z,\z,\Delta r)}{|z|^{2k}(\Delta r)^k}.$$
\end{lem}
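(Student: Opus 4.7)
The plan is to proceed by induction on $k = a+b \ge 0$. For the base case $k=0$, taking $p_{000}\equiv 1$ and $q_{000}\equiv 0$ recovers $\chi_{r,\Delta r}(z) = s((|z|-r)/\Delta r)$ exactly. The case $k=1$ is essentially the computation already displayed in (\ref{eq0}), which sets the template for the inductive argument.

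For the induction step, assume the formula for every $(a,b)$ with $a+b = k$, and choose $(a',b')$ with $a'+b' = k+1$. Without loss of generality $a' \ge 1$; I would apply $\partial/\partial z$ to the inductive expression for $\partial^{a'-1}/\partial z^{a'-1}\,\partial^{b'}/\partial\z^{b'}\chi_{r,\Delta r}$ (the $b' \ge 1$ case being entirely symmetric). The product rule splits each summand into four kinds of contributions: the chain-rule term from $s^{(c)}((|z|-r)/\Delta r)$, which yields $s^{(c+1)}$ multiplied by $\z/(2|z|\Delta r)$; the $\partial/\partial z$ of the polynomials $p_{abc}(z,\z,\Delta r)$ and $q_{abc}(z,\z,\Delta r)$, which remain polynomials in $(z,\z,\Delta r)$; the derivative of the explicit $|z|$ factor sitting in front of $q_{abc}$, computed via $\partial|z|/\partial z = \z/(2|z|)$; and the derivative of $|z|^{-2k}=(z\z)^{-k}$, equal to $-k\z/|z|^{2k+2}$.

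The only delicate point, and thus the real content of the lemma, is to confirm that the result still fits the template with $k$ replaced by $k+1$. Each time a differentiation lands on $|z|$, the resulting $1/|z|$ looks foreign to the desired structure; the remedy is to multiply the offending numerator and denominator by $|z|$ (once or twice as needed) and invoke $|z|^2 = z\z$ to convert even powers of $|z|$ in the numerator into polynomials in $(z,\z)$, while the denominator exponent on $|z|$ is raised to $2(k+1)$. A compensating factor $\Delta r/\Delta r$ brings the $\Delta r$ denominator up to $(\Delta r)^{k+1}$. After collecting, the contributions in which the chain rule acted on $s^{(c)}$ end up at index $c+1$ and all other contributions stay at index $c$, so the new sum runs over $c' = 0,\ldots,k+1$ with new polynomial coefficients $p_{a'b'c'}$ and $q_{a'b'c'}$ having complex coefficients independent of $r_1$, $r$, $\Delta r$, and $s$. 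The main obstacle is not conceptual but bookkeeping: one must check at each step that no half-integer powers of $|z|$ survive after applying $|z|^2=z\z$; given this, the induction closes.
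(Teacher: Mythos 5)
Your proposal is correct and follows exactly the approach the paper takes: induction on $k$, with the base cases $k=0,1$ verified explicitly and the inductive step carried out via the product rule followed by normalization of the $|z|$ and $\Delta r$ powers (using $|z|^2 = z\bar z$ to absorb even powers of $|z|$ into the polynomial numerators). The paper dismisses the inductive step as ``straightforward and omitted''; your write-up supplies precisely that bookkeeping, correctly tracking that the chain-rule contribution shifts the index to $c+1$ while the other three contributions remain at index $c$.
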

\begin{proof}
  The $k=0$ case is trivial and the $k=1$ case is stated above.  We
  record the second derivatives:
\begin{eqnarray}
  \frac{\partial^2}{\partial\z^2}\chi_{r,\Delta r}(z)&=&\overline{\frac{\partial^2}{\partial z^2}\chi_{r,\Delta r}(z)}\nonumber\\
  &=&s^{\prime\prime}\left(\frac{|z|-r}{\Delta r}\right)\frac{z^2}{4|z|^2(\Delta r)^2}+s^\prime\left(\frac{|z|-r}{\Delta r}\right)\frac{-z^2}{4|z|^3\Delta r},\label{eq-2}\\
  \frac{\partial^2}{\partial z\partial\z}\chi_{r,\Delta r}(z)&=&s^{\prime\prime}\left(\frac{|z|-r}{\Delta r}\right)\frac1{4(\Delta r)^2}+s^\prime\left(\frac{|z|-r}{\Delta r}\right)\frac1{4|z|\Delta r}.\nonumber
\end{eqnarray}
  The proof for all larger $k$ is by induction on $k$; the calculation
  is straightforward and omitted here.
\end{proof}
It follows as a consequence of the Lemma that there are positive
constants $m_{ab}$ (indexed by $a,b\ge0$, $a+b=k$, and depending on
the choices of $s$ and $r_1$, but not depending on $r$, $\Delta r$) so
that
\begin{eqnarray*}
  \left|\frac{\partial^a}{\partial
  z^a}\frac{\partial^b}{\partial\z^b}\chi_{r,\Delta
  r}(z)\right|&\le&\frac{m_{ab}}{|z|^{2k}(\Delta r)^{k}}\\
  \implies\max_{z\in A_{r,\Delta r}}\left|\frac{\partial^a}{\partial
  z^a}\frac{\partial^b}{\partial\z^b}\chi_{r,\Delta
  r}(z)\right|&\le&\frac{m_{ab}}{r^{2k}(\Delta r)^{k}}.
\end{eqnarray*}
In various cases, in particular $k=1$ as in (\ref{eq-1}), the $r^{2k}$
can be improved (with a smaller exponent), but it is good enough to
use later in Lemma \ref{lem1.2}.

\subsection{The basic construction of the examples}

Let $r_n$ be a real sequence decreasing with limit $=0$.  Denote
$\Delta r_n=r_n-r_{n+1}$.

Let $A_n$ denote the closed annulus $$A_n=A_{r_{n+1},\Delta
  r_n}=\{z\in\co:r_{n+1}\le|z|\le r_n\},$$ so the union is a disk:
$D_{r_1}=(\cup A_n)\cup\{0\}$.  The annular cutoff functions can be
indexed by $n$: $\chi_n=\chi_{r_{n+1},\Delta r_n}:A_n\to\re$.

For $n\in\na$, let $p(n)$ be an increasing positive integer sequence.
Let $F(n)$ be a positive real valued sequence.  Define a function
${\bf u}:D_{r_1}\to\co^2$ by ${\bf u}(0)=\left[\begin{array}{c}0\\0\end{array}\right]$ and on the annulus $A_n$, for even
$n$:
\begin{eqnarray}
  {\bf u}(z)&=&\left[\begin{array}{c}u^1(z)\\u^2(z)\end{array}\right],\nonumber\\
  u^1(z)&=&F(n)z^{p(n)},\label{eq38}\\
  u^2(z)&=&\chi_n(z)F(n-1)z^{p(n-1)}+(1-\chi_n(z))F(n+1)z^{p(n+1)}.\label{eq39}
\end{eqnarray}
For odd $n$, switch the formulas for $u^1$, $u^2$.

So far, for any $s$, $r_n$, $p$, $F$, the function ${\bf u}$ is smooth
on $D_{r_1}\setminus\{0\}$.  We also have that ${\bf u}$ and ${\bf
u}_z$ have non-zero value at every point of $D_{r_1}\setminus\{0\}$;
for $n$ even (and switching indices if $n$ is odd):
\begin{equation}\label{eq46}
  \|{\bf u}_z\|\ge\left|\frac{\partial}{\partial
    z}u^1\right|=F(n)p(n)|z|^{p(n)-1}.
\end{equation}

\subsection{Smoothness at the origin}

An important property of the examples ${\bf u}:D_{r_1}\to\co^2$ we
want to construct is that they are smooth at (and near) the origin.
It is not enough to check only that the components vanish to infinite
order.  In general, as easily constructed examples would show, for
functions $f:\re^N\to\re^M$, $f$ can vanish to infinite order at the
origin: $$\displaystyle{\lim_{\vec x\to\vec0}\frac{f(\vec x)}{\|\vec
x\|^k}=\vec0}$$ for all whole numbers $k$, but need not be smooth.
Our approach to proving smoothness of our examples will be to show
$u^1$, $u^2$, and all their higher partial derivatives approach $0$;
this implies vanishing to infinite order, as in the following Lemma.

\begin{lem}\label{thm3.10}
  Given $f:\re^2\setminus{\vec0}\to\re^1$, suppose $f$ is smooth and
  for each $j,k=0,1,2,3,\ldots$,
  $$\lim_{(x,y)\to{\vec0}}\frac{\partial^{k+j}f(x,y)}{\partial
  x^k\partial y^j}=0.$$ Then extending $f$ so that $f(0,0)=0$ defines
  a smooth function on $\re^2$ that vanishes to infinite order at the
  origin.
\end{lem}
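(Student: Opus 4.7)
The plan is to upgrade $f$ to a function in $C^k(\re^2)$ for every $k$ by induction on $k$, and then to deduce the infinite-order vanishing at the origin as a direct consequence of Taylor's theorem applied at $\vec0$. The only nontrivial input beyond the hypothesis is the one-variable mean value theorem, which is what converts limits of partial derivatives approaching the origin into actual values of partial derivatives at the origin.

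Setting $f(\vec0) = 0$ and taking $k = j = 0$ in the hypothesis makes $f$ continuous on all of $\re^2$. For the inductive step, suppose that for some $n \ge 0$ we have shown $f \in C^n(\re^2)$ with $D^\alpha f(\vec0) = 0$ for every multi-index with $|\alpha| \le n$. Fix $\alpha$ with $|\alpha| = n$ and set $g = D^\alpha f$, which is smooth off the origin, continuous on $\re^2$ with $g(\vec0) = 0$, and satisfies $\partial_x g(x,y) \to 0$ and $\partial_y g(x,y) \to 0$ as $(x,y) \to \vec0$ by the hypothesis applied with order $n+1$. Applying the one-variable mean value theorem to $t \mapsto g(t,0)$ on $[0,h]$ (where it is continuous and differentiable on the interior) gives $(g(h,0)-g(\vec0))/h = \partial_x g(\xi,0)$ for some $\xi$ between $0$ and $h$, so $\partial_x g(\vec0)$ exists and equals $0$; the same argument along the $y$-axis handles $\partial_y g(\vec0)$. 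Since $\partial_x g$ and $\partial_y g$ are smooth off the origin and continuous at the origin (their limit there is $0$, which now matches their value there), they are continuous on $\re^2$, so $g \in C^1(\re^2)$. Doing this for every $\alpha$ with $|\alpha| = n$ upgrades $f$ to $C^{n+1}(\re^2)$ with all $(n+1)$-st derivatives vanishing at $\vec0$, which closes the induction and produces $f \in C^\infty(\re^2)$.

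For the vanishing to infinite order, fix $k \ge 0$. By the previous step $f \in C^{k+1}(\re^2)$ and $D^\alpha f(\vec0) = 0$ for all $|\alpha| \le k$, so Taylor's theorem at $\vec0$ with Lagrange remainder collapses to
$$f(x,y) = \sum_{|\alpha| = k+1} \frac{D^\alpha f(\xi_{(x,y)})}{\alpha!}(x,y)^\alpha$$
for some $\xi_{(x,y)}$ on the segment from $\vec0$ to $(x,y)$. Each $D^\alpha f$ with $|\alpha| = k+1$ is continuous on $\re^2$ and hence bounded on a closed disk about the origin, yielding $|f(x,y)| \le C\|(x,y)\|^{k+1}$ and therefore $\|(x,y)\|^{-k} f(x,y) \to 0$. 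The main obstacle is genuinely the inductive step — specifically the passage from ``the partials of $g$ tend to $0$ at the origin'' to ``the partials of $g$ exist and equal $0$ at the origin'' — where one must avoid the standard trap of confusing a limit of partial derivatives with a partial derivative of a limit; once the mean value theorem is invoked correctly, the rest of the argument is routine bookkeeping.
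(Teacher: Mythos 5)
Your proposal is correct and follows essentially the same inductive strategy as the paper: establish $C^n$ regularity with all derivatives of order $\le n$ vanishing at the origin, then pass from order $n$ to order $n+1$ by a one-dimensional argument along the axes, and finally deduce infinite-order vanishing from Taylor's theorem. The only cosmetic differences are that you invoke the mean value theorem where the paper uses L'H\^opital's rule (interchangeable tools for this difference quotient), and that you phrase the induction in terms of multi-indices and $C^n$ regularity (letting Schwarz's theorem commute the partials) whereas the paper explicitly tracks non-commutative words of $\partial_x$, $\partial_y$ symbols to avoid appealing to Schwarz.
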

\begin{proof}
  $f$ is continuous at $\vec0$ by hypothesis ($j=k=0$).  To show $f$ is
  smooth, we only need to show every partial derivative of order
  $\ell$ of $f$ exists at $\vec0$, and has value $0$; then it follows
  that for $k+j=\ell$, $\frac{\partial^{k+j}f(x,y)}{\partial
  x^k\partial y^j}$ is continuous at $\vec0$.

  The proof is by induction on $\ell$; suppose for any non-commutative
  word $x^{k_1}y^{j_1}\cdots x^{k_a}y^{j_b}$ with $k_1+\cdots
  k_a+j_1+\ldots+j_b=k+j=\ell$, $\frac{\partial^{\ell}}{\partial
  x^{k_1}y^{j_1}\cdots x^{k_a}y^{j_b}}f(x,y)$ exists at $\vec0$ and has
  value $0$.  Then, the $x$-derivative at the origin is (with the
  $y$-derivative being similar):
  \begin{eqnarray*}
    &&\left.\frac{\partial}{\partial
    x}\left(\frac{\partial^{\ell}f(x,y)}{\partial x^{k_1}y^{j_1}\cdots
    x^{k_a}y^{j_b}}\right)\right]_{(0,0)}\\
    &=&\lim_{t\to0}\frac{\frac{\partial^{\ell}}{\partial
    x^{k_1}y^{j_1}\cdots
    x^{k_a}y^{j_b}}f(0+t,0)-\frac{\partial^{\ell}}{\partial
    x^{k_1}y^{j_1}\cdots x^{k_a}y^{j_b}}f(0,0)}{t}\\
    &=&\lim_{t\to0}\frac{\frac{\partial^{\ell}}{\partial
    x^{k}y^{j}}f(t,0)-0}{t}.
  \end{eqnarray*}
  Let $g(t)=\frac{\partial^{\ell}}{\partial x^{k}y^{j}}f(t,0)$; then $\displaystyle{\lim_{t\to0}g(t)=0}$ by hypothesis, and $g^\prime(t)=\frac{\partial^{\ell+1}}{\partial x^{k+1}y^{j}}f(t,0)$.  L'H\^opital's Rule applies to the above limit: 
\begin{eqnarray*}
  \lim_{t\to0}\frac{g(t)}{t}&=&\lim_{t\to0}\frac{g^\prime(t)}{1}=\lim_{t\to0}\frac{\partial^{\ell+1}}{\partial x^{k+1}y^{j}}f(t,0)=0.
\end{eqnarray*}
  The property of vanishing to infinite order follows from Taylor
  approximation at the origin.
\end{proof}

For our examples ${\bf u}$, we want to choose $r_n$, $p$, and $F$, so
that ${\bf u}$ is smooth and vanishes to infinite order at $0$.  The
following criterion for smoothness will be verified for both the
Examples in Section \ref{sec2}.
\begin{lem}\label{lem1.2}
  If $\frac{(\Delta r_n/r_n)}{(\Delta r_{n+2}/r_{n+2})}$ is a bounded
  sequence and, for each integer $k\ge0$,
  \begin{equation}\label{eq42}
    \lim_{n\to\infty}\frac{F(n+1)(p(n+1))^kr_n^{p(n+1)-4k}}{(\Delta
r_n/r_n)^k}=0,
  \end{equation}
  then ${\bf u}$ is smooth and vanishes to infinite order at the
origin.
\end{lem}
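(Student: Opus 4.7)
The plan is to apply Lemma \ref{thm3.10} to the real and imaginary parts of each component $u^j$. Since $\mathbf u$ is already smooth on $D_{r_1}\setminus\{0\}=\bigcup_n A_n$, it suffices to show that for every $(a,b)$, the partial $\partial_z^a\partial_{\bar z}^b u^j(z)$ tends to $0$ uniformly on $A_n$ as $n\to\infty$. Hypothesis (\ref{eq42}) is shaped to be precisely the envelope that dominates all Leibniz terms produced by differentiating $\chi_n$, so the body of the proof is bookkeeping of exponents.

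On any $A_n$, formulas (\ref{eq38})--(\ref{eq39}) (and their odd-$n$ analogues) say that one component is the pure monomial $F(n)z^{p(n)}$, while the other equals $F(n+1)z^{p(n+1)} + \chi_n\bigl[F(n-1)z^{p(n-1)}-F(n+1)z^{p(n+1)}\bigr]$. For the pure monomial, the derivative vanishes unless $b=0$, and is then bounded by $F(n)p(n)^a r_n^{p(n)-a}$. For the $\chi_n$-weighted component, apply Leibniz; since $z^{p(n\pm 1)}$ is holomorphic, every $\partial_{\bar z}$ must fall on $\chi_n$, and the surviving terms are $C_{a'}\,\partial_z^{a'}\partial_{\bar z}^b\chi_n\cdot\partial_z^{a-a'}\bigl(F(n\pm 1)z^{p(n\pm 1)}\bigr)$ for $0\leq a'\leq a$. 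Writing $\ell:=a'+b$ and $k:=a+b$, Lemma \ref{lem1.1} (and the estimate immediately following it) bounds $|\partial_z^{a'}\partial_{\bar z}^b\chi_n(z)|\leq m_{a'b}/(r_{n+1}^{2\ell}(\Delta r_n)^\ell)$ on $A_n$, while $|\partial_z^{a-a'}z^p|\leq p^{a-a'}r_n^{p-(a-a')}$ for $n$ large enough that $p(n\pm 1)\geq a$.

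I would then convert each piece into the shape of (\ref{eq42}). Under the implicit standing assumption that $\Delta r_n/r_n$ stays bounded away from $1$ (forced by compatibility of (\ref{eq42}) with the $r_{n+1}^{-2k}$ appearing in the $\chi_n$ estimate), one has $r_{n+1}\asymp r_n$, so $r_{n+1}^{-2\ell}\leq C r_n^{-2\ell}$. Substituting $(\Delta r_n)^{-\ell}=r_n^{-\ell}(\Delta r_n/r_n)^{-\ell}$, using $p(n\pm 1)^{a-a'}\leq p(n\pm 1)^k$, $(\Delta r_n/r_n)^{-\ell}\leq(\Delta r_n/r_n)^{-k}$ (because $\Delta r_n/r_n<1$), and the elementary inequality $(a-a')+3\ell=a+2a'+3b\leq 3k\leq 4k$ together with $r_n<1$ (so $r_n^{p-(a-a')-3\ell}\leq r_n^{p-4k}$), every summand is dominated by the single envelope
$$C\,\frac{F(n\pm 1)\,p(n\pm 1)^k\,r_n^{p(n\pm 1)-4k}}{(\Delta r_n/r_n)^k},\qquad z\in A_n,$$
and the pure-monomial contribution fits the same shape with $F(n),p(n)$ in place of $F(n\pm 1),p(n\pm 1)$.

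Finally, the three flavours of terms are killed as follows. The ``$+$'' term ($F(n+1)$) goes to $0$ by (\ref{eq42}) as stated. The pure-monomial term ($F(n)$) is killed by (\ref{eq42}) shifted to index $n-1$, using $r_n\leq r_{n-1}<1$ and $(\Delta r_{n-1}/r_{n-1})^{-k}\geq 1$ to dominate. The ``$-$'' term ($F(n-1)$) is killed by (\ref{eq42}) shifted to index $n-2$, with $r_n\leq r_{n-2}$ handled the same way, and the first hypothesis (boundedness of $(\Delta r_n/r_n)/(\Delta r_{n+2}/r_{n+2})$, applied at $n-2$) used to replace $(\Delta r_n/r_n)^{-k}$ by a constant multiple of $(\Delta r_{n-2}/r_{n-2})^{-k}$. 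Thus every partial of $u^j$ tends to $0$ as $z\to 0$, and Lemma \ref{thm3.10} delivers both smoothness at the origin and vanishing to infinite order. The main obstacle I expect is the envelope step — verifying that the crude $r_n^{-4k}(\Delta r_n/r_n)^{-k}$ really does dominate every Leibniz cross-term after collapsing the various powers of $r_{n+1}$, $\Delta r_n$, $p(n\pm 1)$ and $|z|$ — together with the even-shift bookkeeping needed to route the $F(n-1)$ estimate at index $n$ back to (\ref{eq42}) at index $n-2$.
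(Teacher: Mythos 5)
Your overall strategy is the same as the paper's: reduce to Lemma \ref{thm3.10}, bound the Leibniz terms using Lemma \ref{lem1.1}, fold each bound into the shape of (\ref{eq42}) via $\Delta r_n = r_n(\Delta r_n/r_n)$, and handle the $F(n)$ and $F(n-1)$ contributions by shifting (\ref{eq42}) to indices $n-1$ and $n-2$ (the last using the boundedness hypothesis). That is exactly how the paper routes (\ref{eq7}) and (\ref{eq8}) back to (\ref{eq42}) via (\ref{eq9}) and (\ref{eq43}).

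There is one genuine gap. You bound the $\chi_n$-factor by its sup over $A_n$, namely $m_{a'b}/(r_{n+1}^{2\ell}(\Delta r_n)^\ell)$, and separately bound the monomial factor by $r_n^{p-(a-a')}$, and then need $r_{n+1}^{-2\ell}\le C r_n^{-2\ell}$ to collapse both into an $r_n$-envelope. You justify this with a ``standing assumption that $\Delta r_n/r_n$ stays bounded away from $1$,'' claiming it is forced by (\ref{eq42}). It is not. One can take, say, $F\equiv 1$, $p(n)=n$, and $\Delta r_n/r_n=1-\epsilon_n$ with $\epsilon_n\to 0$ chosen so slowly that $\frac{1-\epsilon_n}{1-\epsilon_{n+2}}$ stays bounded; then $r_{n+1}=\epsilon_n r_n$, so $r_n\to 0$ extremely fast, (\ref{eq42}) holds, the boundedness hypothesis holds, but $r_{n+1}/r_n\to 0$ along that sequence and your constant $C$ blows up. So the assumption is an extra hypothesis, not a consequence.

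The paper avoids this by never separating the maxima. Instead of bounding $|z|^{-2\ell}$ by $r_{n+1}^{-2\ell}$ on its own, the pointwise factor $|z|^{-2\ell}$ is multiplied against $|z|^{p(n\pm 1)-(a-a')}$ first, giving $|z|^{p(n\pm 1)-(a-a')-2\ell}$; for $n$ large this exponent is positive (since $(a-a')+2\ell\le 3k$ and $p(n\pm 1)\to\infty$), so the maximum over $A_n$ is attained at the \emph{outer} radius and is $\le r_n^{p(n\pm 1)-3k}$. No $r_{n+1}$ ever appears, and the rest of your bookkeeping (using $\Delta r_n=r_n(\Delta r_n/r_n)$ to produce the $4k$ and the $(\Delta r_n/r_n)^{-k}$) then goes through without the unjustified assumption. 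With that one change your argument matches the paper's.
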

\begin{proof}
  From Lemma \ref{thm3.10} and the construction of ${\bf u}$, it is
  enough to show, for any non-negative integers $a$, $b$, with
  $a+b=k$, that
$$\displaystyle{\max_{z\in A_n}\left|\left(\frac{\partial}{\partial
z}\right)^a\left(\frac{\partial}{\partial\z}\right)^bu^1\right|}, \ \
\ \displaystyle{\max_{z\in A_n}\left|\left(\frac{\partial}{\partial
z}\right)^a\left(\frac{\partial}{\partial\z}\right)^bu^2\right|}$$
both have limit $0$ as $n\to\infty$.

  The following estimates for the derivatives assume $n$ is
  sufficiently large compared to $k$.  The derivatives of
  $u^1=F(n)z^{p(n)}$ (\ref{eq38}) are easy:
  \begin{eqnarray}
    \left|\left(\frac{\partial}{\partial
    z}\right)^a\left(\frac{\partial}{\partial\z}\right)^bu^1\right|&\le&\left|F(n)p(n)(p(n)-1)\cdots(p(n)-k+1)z^{p(n)-k}\right|\nonumber\\
    &\le&F(n)(p(n))^kr_n^{p(n)-k}\label{eq7}
  \end{eqnarray}

  The derivatives of $u^2(z)$ (\ref{eq39}) are considered one term at
  a time.  For the first term:
  \begin{eqnarray*}
    &&\left(\frac{\partial}{\partial
    z}\right)^a\left(\frac{\partial}{\partial\z}\right)^b\left(\chi_n(z)F(n-1)z^{p(n-1)}\right)\\
    &=&F(n-1)\left(\frac{\partial}{\partial
    z}\right)^a\left(\left(\left(\frac{\partial}{\partial\z}\right)^b\chi_n(z)\right)\cdot
    z^{p(n-1)}\right)\\
    &=&F(n-1)\sum_{a_1+a_2=a}^{2^a}\left(\left(\frac{\partial}{\partial
    z}\right)^{a_1}\left(\frac{\partial}{\partial\z}\right)^b\chi_n(z)\right)\left(\left(\frac{\partial}{\partial
    z}\right)^{a_2}z^{p(n-1)}\right)
  \end{eqnarray*}
  The sum is over the $2^a$ terms (with many repeated) that result
  from applying the product rule $a$ times.

  By Lemma \ref{lem1.1},
  $$\left|\left(\frac{\partial}{\partial
    z}\right)^{a_1}\left(\frac{\partial}{\partial\z}\right)^b\chi_n(z)\right|\le\frac{m_{a_1b}}{|z|^{2(a_1+b)}(\Delta
    r_n)^{a_1+b}},$$ and 
  \begin{eqnarray*}
    \left|\left(\frac{\partial}{\partial
    z}\right)^{a_2}z^{p(n-1)}\right|&=&p(n-1)\cdots(p(n-1)-a_2+1)|z|^{p(n-1)-a_2}\\
    &\le&(p(n-1))^{a_2}|z|^{p(n-1)-a_2}.
  \end{eqnarray*}
  Let $\displaystyle{m_k=\max_{a+b\le k}m_{ab}}$.  Then
  \begin{eqnarray}
    &&\left|\left(\frac{\partial}{\partial
    z}\right)^a\left(\frac{\partial}{\partial\z}\right)^b\left(\chi_n(z)F(n-1)z^{p(n-1)}\right)\right|\nonumber\\
    &\le&F(n-1)2^a\max_{a_1\le a}\left\{\frac{m_{a_1b}}{|z|^{2(a_1+b)}(\Delta r_n)^{a_1+b}}\right\}\max_{a_2\le a}\left\{(p(n-1))^{a_2}|z|^{p(n-1)-a_2}\right\}\nonumber\\
    &\le&F(n-1)2^a\frac{m_k}{|z|^{2k}(\Delta r_n)^k}(p(n-1))^a|z|^{p(n-1)-a}\nonumber\\
    &\le&F(n-1)2^k\frac{m_k}{(\Delta r_n)^k}(p(n-1))^k|z|^{p(n-1)-3k}\nonumber\\
    &\le&2^km_k\frac{F(n-1)(p(n-1))^kr_n^{p(n-1)-3k}}{(\Delta r_n)^k}.\label{eq8}
  \end{eqnarray}
  Similarly for the second term of $u^2(z)$,
  \begin{eqnarray}
    &&\left|\left(\frac{\partial}{\partial
    z}\right)^a\left(\frac{\partial}{\partial\z}\right)^b\left((1-\chi_n(z))F(n+1)z^{p(n+1)}\right)\right|\nonumber\\
    &\le&2^km_k\frac{F(n+1)(p(n+1))^kr_n^{p(n+1)-3k}}{(\Delta r_n)^k}.\label{eq9}
  \end{eqnarray}
So, the criteria for all the derivatives of ${\bf u}$ to vanish at the
origin are that the expressions (\ref{eq7}), (\ref{eq8}), and
(\ref{eq9}) must all have limit $0$ as $n\to\infty$.  The hypothesis
(\ref{eq42}) is equivalent to (\ref{eq9})$\to0$.  Comparing
(\ref{eq9}) to (\ref{eq7}) by shifting the index in (\ref{eq7}) from
$n$ to $n+1$, this scalar multiple of (\ref{eq9}) is much larger:
$$\frac{F(n+1)(p(n+1))^kr_n^{p(n+1)-3k}}{(\Delta
  r_n)^k}>F(n+1)(p(n+1))^kr_{n+1}^{p(n+1)-k},$$ so if (\ref{eq9}) has
limit $0$, then so does (\ref{eq7}).

(\ref{eq9})$\to0$ also implies $F(n+1)r_n^{p(n+1)-k}\to0$, which is
enough to show ${\bf u}$ vanishes to infinite order: $\|z^{-k}{\bf
u}(z)\|\to0$ as $z\to0$.

  Shifting the index in (\ref{eq8}) from $n$ to $n+2$ gives the
  following quantity (\ref{eq43}), which is comparable to (\ref{eq9}):
  \begin{eqnarray}
    &&2^km_k\frac{F(n+1)(p(n+1))^kr_{n+2}^{p(n+1)-3k}}{(\Delta
    r_{n+2})^k}\label{eq43}\\
    &<&2^km_k\frac{F(n+1)(p(n+1))^kr_n^{p(n+1)-4k}}{(\Delta
    r_n/r_n)^k}\cdot\frac{(\Delta r_n/r_n)^k}{(\Delta
    r_{n+2}/r_{n+2})^k},\nonumber
  \end{eqnarray}
  and under the additional hypothesis that $\frac{\Delta
    r_n/r_n}{\Delta r_{n+2}/r_{n+2}}$ is a bounded sequence,
  (\ref{eq42}) also implies (\ref{eq8})$\to0$.
\end{proof}

\subsection{Comparing first derivatives}

We want to choose $F$, $p$, and $r_n$ so that
$\displaystyle{\frac{\|{\bf u}_{\z}\|}{\|{\bf u}_z\|}}$ is small, as
$z\to0$.  For $z\in A_n$, $n$ even (and switching indices if $n$ is
odd), expanding the derivative and using (\ref{eq-1}) gives:
\begin{eqnarray*}
  \|{\bf u}_{\z}\|&=&\left|\frac{\partial}{\partial\z}u^2\right|\\
  &\le&\frac{m_{01}}{\Delta r_n}F(n-1)|z|^{p(n-1)}+\frac{m_{01}}{\Delta
  r_n}F(n+1)|z|^{p(n+1)}\\ &=&\frac{m_{01}}{\Delta
  r_n}|z|^{p(n)-1}\left[F(n-1)|z|^{p(n-1)-p(n)+1}+F(n+1)|z|^{p(n+1)-p(n)+1}\right].
\end{eqnarray*}
Using (\ref{eq46}) and introducing a factor $g(n)>0$, for $z\in A_n$:
\begin{eqnarray}\label{eq2}
  \frac{\|{\bf u}_{\z}\|}{\|{\bf u}_z\|}&\le&\frac{m_{01}r_ng(n)}{\Delta
  r_np(n)}\cdot\frac{\left[F(n-1)r_{n+1}^{p(n-1)-p(n)}+F(n+1)r_n^{p(n+1)-p(n)}\right]}{g(n)F(n)}.
\end{eqnarray}
The first fraction in the product is what we want to make small for
large $n$, depending on $p$ and $\displaystyle{\frac{\Delta
r_n}{r_n}}$.  The second fraction we would like to make bounded,
depending on $F$ and $r_n$, and an arbitrary fudge factor $g$.  The
role of $g$ is to manage the size of $F$ and simplify the calculation
proving boundedness of the second factor, possibly at the expense of
affecting the rate at which the first factor approaches $0$.

\section{Examples}\label{sec2}

\begin{example}\label{ex2.1}
  Rosay's example (\cite{rosay}) has $p(n)=n$, $r_n=2^{-n+1}$, and
  $\frac{\Delta r_n}{r_n}=\frac12$.  Then (\ref{eq2}) becomes:
\begin{eqnarray}
  \frac{\|{\bf u}_{\z}\|}{\|{\bf u}_z\|}&\le&\frac{m_{01}2g(n)}{n}\cdot\frac{\left[F(n-1)2^{n}+F(n+1)2^{-n+1}\right]}{g(n)F(n)}.\label{eq40}
\end{eqnarray}
The choice, as in (\cite{rosay}), $F(n)=2^{n^2/2}$, satisfies the
recursive formula 
\begin{equation}\label{eq45}
  F(n-1)2^n=g(n)F(n),
\end{equation}
with $g(n)=\sqrt{2}$.  This simplifies the second factor of the RHS of
(\ref{eq40}), so it is easily seen to be bounded.  The conclusion is
$\frac{\|{\bf u}_{\z}\|}{\|{\bf u}_z\|}\le\frac{C_1}n$ for $z\in A_n$,
and since $\frac1n\le\frac1{-\log_2|z|}\le\frac1{n-1}$ on $A_n$,
  \begin{equation}\label{eq41}
    \frac{\|{\bf u}_{\z}\|}{\|{\bf u}_z\|}\le\frac{C_1}{-\log_2|z|}
  \end{equation}
for all $z\in D_1\setminus\{0\}$.

To check that ${\bf u}$ is smooth and vanishing to infinite order at the
origin, it is enough to verify the condition of Lemma \ref{lem1.2};
for each fixed $k\ge0$:
$$\lim_{n\to\infty}\frac{2^{(n+1)^2/2}(n+1)^k(2^{-n+1})^{(n+1-4k)}}{(2^{-1})^k}=0.$$
\end{example}

The goal of the next example is to improve upon the order of
vanishing of the ratio (\ref{eq41}).

\begin{example}\label{ex2.3}
  Consider $r_n=\frac{1}{\ln(n+1)}$, so
  $r_1=\frac1{\ln(2)}\approx1.44$, $r_2=\frac1{\ln(3)}$, \ldots.  This
  radius shrinks much more slowly than in Example \ref{ex2.1}.  Since
  $$\lim_{n\to\infty}\frac{1-\frac{\ln(n+1)}{\ln(n+2)}}{1/(n\ln(n+2))}=1,$$
  there are constants $C_2$, $c_2>0$ so that for all $n$:
  \begin{eqnarray}
    \frac{c_2}{n\ln(n+2)}<\frac{\Delta
    r_n}{r_n}&=&1-\frac{\ln(n+1)}{\ln(n+2)}<\frac{C_2}{n\ln(n+2)}.\label{eq44}
  \end{eqnarray}
  Let $p(n)=n^2$; then the
  inequality (\ref{eq2}) becomes:
\begin{eqnarray*}
  \frac{\|{\bf u}_{\z}\|}{\|{\bf u}_z\|}&\le&\frac{m_{01}\cdot(n\ln(n+2))g(n)}{c_2n^2}\\
  &&\
  \cdot\frac{\left[F(n-1)\left[\frac{1}{\ln(n+2)}\right]^{-2n+1}+F(n+1)\left[\frac{1}{\ln(n+1)}\right]^{2n+1}\right]}{g(n)F(n)}.
\end{eqnarray*}
  This motivates, in analogy with the previous Example, this choice of
  $g$ and a recursive formula for $F(n)$ as in (\ref{eq45}):
  \begin{eqnarray*}
    g(n)&=&\ln(n+2),\\ F(n)&=&(\ln(n+2))^{[2n-2]}F(n-1)\\
    &=&(\ln(n+2))^{[2n-2]}\cdot(\ln(n+1))^{[2(n-1)-2]}\cdots(\ln(5))^4\cdot(\ln(4))^2\\
    &=&\left[(\ln(n+2))^{n-1}\cdot
    (\ln(n+1))^{n-2}\cdots(\ln(5))^2\cdot(\ln(4))^1\right]^{2}.
  \end{eqnarray*}
  Then the following sequence of ratios is bounded above because it is
  convergent as $n\to\infty$:
  \begin{eqnarray*}
    &&\frac{F(n-1)(\ln(n+2))^{[2n-1]}+F(n+1)(\ln(n+1))^{-[2n+1]}}{g(n)F(n)}\\
    &=&\frac{F(n-1)(\ln(n+2))^{[2n-1]}+\frac{(\ln(n+3))^{2n}(\ln(n+2))^{[2n-2]}F(n-1)}{(\ln(n+1))^{[2n+1]}}}{\ln(n+2)(\ln(n+2))^{[2n-2]}F(n-1)}\\
    &=&1+\frac{(\ln(n+3))^{2n}}{\ln(n+2)(\ln(n+1))^{[2n+1]}}\le C_3.
  \end{eqnarray*}
  Here we used the elementary calculus lemma that
  $\left(\frac{\ln(n+2)}{\ln(n)}\right)^n$ is a bounded sequence.

   The estimate for the ratio of derivatives on $A_n$, for even $n$,
  becomes:
  \begin{eqnarray}
    \frac{\|{\bf u}_{\z}\|}{\|{\bf
    u}_z\|}&\le&\frac{|u_{\z}^2|}{|u_z^1|}\le\frac{m_{01}\cdot(n\ln(n+2))\ln(n+2)}{c_2n^2}\cdot
    C_3\nonumber\\
    &=&\frac{m_{01}(\ln(n+2))^2C_3}{c_2n}=\frac{C_4(\ln(n+2))^2}{n}\label{eq-3}\\
    &<&\frac{C_5(\ln(n+1))^2}{n+2}.\label{eq18}
  \end{eqnarray}
  For $z\in A_n$,
  \begin{eqnarray*}
  &&\frac1{\ln(n+2)}\le|z|\le\frac1{\ln(n+1)}\\
  &\iff&(\ln(n+1))^2\le\frac1{|z|^2}\le(\ln(n+2))^2\\
  &\iff&\frac1{n+2}\le\exp(-\frac1{|z|})\le\frac1{n+1},
  \end{eqnarray*}
  so 
   \begin{eqnarray*}
    \frac{\|{\bf u}_{\z}\|}{\|{\bf u}_z\|}&\le&C_5\frac1{|z|^2\exp(\frac1{|z|})},
  \end{eqnarray*}
  for all $z\in D_{r_1}\setminus\{0\}$.  It remains to check that
  ${\bf u}$ is smooth, and vanishes to infinite order.  The hypothesis
  on $\Delta r_n$ of Lemma \ref{lem1.2} is satisfied (using
  (\ref{eq44})), so for fixed $k$, consider the expression:
  \begin{eqnarray*}
    &&\frac{F(n+1)(p(n+1))^kr_n^{p(n+1)-4k}}{(\Delta r_n/r_n)^k}\\
    &=&\frac{\left[(\ln(n+3))^n(\ln(n+2))^{n-1}\cdots\ln(4)\right]^{2}((n+1)^2)^k\left(\frac1{\ln(n+1)}\right)^{(n+1)^2-4k}}{\left(1-\frac{\ln(n+1)}{\ln(n+2)}\right)^k}\\
    &<&\frac{\left[(\ln(n+3))^n(\ln(n+2))^{n-1}\cdots\ln(4)\right]^{2}(n+1)^{2k}}{(\ln(n+1))^{(n+1)^2-4k}\left(\frac{c_2}{n\ln(n+2)}\right)^k}\\
    &<&\frac{\left[(\ln(n+3))^n(\ln(n+2))^{n-1}\cdots\ln(4)\right]^{2}(n+1)^{3k}(\ln(n+2))^k}{c_2^k(\ln(n+1))^{(n+1)^2-4k}}.
  \end{eqnarray*}
  The last expression has limit zero by the Ratio Test:
  \begin{eqnarray*}
    &&\frac{\frac{\left((\ln(n+4))^{n+1}(\ln(n+3))^n\cdots\ln(4)\right)^2}{c_2^k(\ln(n+2))^{(n+2)^2-4k}}\cdot(n+2)^{3k}(\ln(n+3))^k}{\frac{\left((\ln(n+3))^n\cdots\ln(4)\right)^2}{c_2^k(\ln(n+1))^{(n+1)^2-4k}}\cdot(n+1)^{3k}(\ln(n+2))^k}\\
  &=&\frac{(\ln(n+4))^{2n+2}(\ln(n+1))^{(n+1)^2-4k}(n+2)^{3k}(\ln(n+3))^k}{(\ln(n+2))^{(n+2)^2-4k}(n+1)^{3k}(\ln(n+2))^k}\\
  &<&\frac{(\ln(n+4))^{2n+2+k}(n+2)^{3k}}{(\ln(n+2))^{2n+3+k}(n+1)^{3k}}\to0,
  \end{eqnarray*}
  again using the boundedness of
  $\left(\frac{\ln(n+2)}{\ln(n)}\right)^n$.
\end{example}

\section{A Beltrami-type system}\label{sec3}

Any smooth map ${\bf u}:\co\to\co^2$, ${\bf
  u}=\left[\begin{array}{c}u^1\\u^2\end{array}\right]$, satisfies the
following Beltrami-type system of first-order differential equations
at points where ${\bf u}_z\ne{\bf0}$:
\begin{eqnarray}
  \left[\begin{array}{c}u^1_{\z}\\u^2_{\z}\end{array}\right]&=&\left[\begin{array}{c}u^1_{\z}\\u^2_{\z}\end{array}\right]\frac{[\overline{u^1_z}\ \ \overline{u^2_z}]}{|u^1_z|^2+|u^2_z|^2}\left[\begin{array}{c}u^1_{z}\\u^2_{z}\end{array}\right]\nonumber\\
  &=&\frac{1}{\|{\bf u}_z\|^2}\left[\begin{array}{cc}u^1_{\z}\overline{u^1_z}&u^1_{\z}\overline{u^2_z}\\u^2_{\z}\overline{u^1_z}&u^2_{\z}\overline{u^2_z}\end{array}\right]\left[\begin{array}{c}u^1_{z}\\u^2_{z}\end{array}\right]\label{eq22}\\
  &=&{\bf Q}_{2\times2}(z)\left[\begin{array}{c}u^1_{z}\\u^2_{z}\end{array}\right].\nonumber
\end{eqnarray}
For ${\bf u}$ constructed as in Section \ref{sec1}, on the annuli
$A_n$ with even $n$, $u^1_{\z}\equiv0$, so the first row of the matrix
in (\ref{eq22}) is $[0 0]$, and similarly the second row is $[0 0]$
for odd $n$.  Define ${\bf Q}(0)$ to be the zero matrix.

For a matrix ${\bf Q}(z)$ defined as in (\ref{eq22}) by some fixed
function ${\bf u}$, the operator
$\LL=\displaystyle{\frac{\partial}{\partial\z}-{\bf
Q}(z)\frac{\partial}{\partial z}}$ is complex linear.  If, on some
neighborhood of $z=0$, the ${\bf Q}(z)$ entries are defined and small
enough, then $\LL$ is elliptic (in the sense of \cite{aim} Section
7.4).

In the following Theorem, we consider ${\bf Q}(z)$ for the example
${\bf u}(z)$ from Example \ref{ex2.3}.  If we restrict ${\bf u}$ and
${\bf Q}$ to $z$ in some sufficiently small neighborhood of the
origin, ${\bf u}$ will be a solution of the elliptic equation
$\LL({\bf u})=0$.

\begin{thm}\label{thm4.1}
  For ${\bf u}$ as in Example \ref{ex2.3}, let
  $q_{ij}(z)=\displaystyle{\frac{u^i_{\z}\overline{u^j_z}}{\|{\bf
        u}_z\|^2}}$ denote the $i,j$ entry in the matrix ${\bf Q}(z)$
  from {\rm{(\ref{eq22})}}.
  \begin{itemize}
    \item
      $q_{ij}\in\cc^\infty(D_{r_1}\setminus\{0\})\cap\cc^0(D_{r_1})$;
    \item $q_{ij}$ vanishes to infinite order:
      $|z^{-k}q_{ij}(z)|\to0$ as $z\to0$ for any $k\ge0$;
    \item The partial derivatives exist at the origin:
      $\frac{\partial}{\partial x}q_{ij}(0)=\frac{\partial}{\partial
      y}q_{ij}(0)=0$;
    \item For any $0<r<r_1$, $q_{22}$ does not have the Lipschitz
      property on $D_r$.
  \end{itemize}
\end{thm}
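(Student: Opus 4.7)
The first three bullets follow quickly from the estimates already proven for Example \ref{ex2.3}. Since (\ref{eq46}) gives $\|\mathbf{u}_z\|>0$ on $D_{r_1}\setminus\{0\}$ and $\mathbf{u}$ is smooth there, each $q_{ij}$ is smooth on $D_{r_1}\setminus\{0\}$ as a rational combination of $\mathbf{u}$ and its first derivatives with nonvanishing denominator. For continuity at the origin and infinite-order vanishing, I would use
\[
|q_{ij}(z)| \;=\; \frac{|u^i_{\z}|\,|u^j_z|}{\|\mathbf{u}_z\|^2} \;\leq\; \frac{\|\mathbf{u}_{\z}\|}{\|\mathbf{u}_z\|} \;\leq\; \frac{C_5}{|z|^2\exp(1/|z|)},
\]
which vanishes faster than any power of $|z|$ as $z\to 0$. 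The third bullet then follows by applying this infinite-order decay to the difference quotients $q_{ij}(t)/t$ and $q_{ij}(it)/t$ as $t\to 0$.

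The fourth bullet is the main obstacle. The key structural remark is that on every odd annulus $A_n$ the second component $u^2$ is holomorphic, so $u^2_{\z}\equiv 0$ there and hence $q_{22}\equiv 0$ on the odd annuli. By continuity $q_{22}$ also vanishes on the bounding circles $\{|z|=r_n\}$ and $\{|z|=r_{n+1}\}$ of every even annulus. If $q_{22}$ were Lipschitz on $D_r$ with constant $L$, then joining any point of $A_n$ (for even $n$) radially to the nearer boundary circle would give $|q_{22}|\leq L\,\Delta r_n$ on that annulus; it therefore suffices to exhibit points $z_n^*\in A_n$ ($n$ even) for which $|q_{22}(z_n^*)|/\Delta r_n \to \infty$.

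The natural candidate is the real midradius point $z_n^*=r_{n+1}+\frac{1}{2}\Delta r_n$, where $\chi_n(z_n^*)=\frac{1}{2}$ and $s'(\frac{1}{2})=2$, so the cutoff derivative attains order $1/\Delta r_n$. At $z_n^*$ I would compare the sizes of the various contributions to $u^2_z$, $u^2_{\z}$, and $u^1_z$ using the explicit formulas $F(n)=(\ln(n+2))^{2n-2}F(n-1)$, $p(n)=n^2$, $r_n=1/\ln(n+1)$, and (\ref{eq44}). The bookkeeping should show: (a) among the three contributions to $u^2_z$, the holomorphic term $\chi_n\,F(n-1)p(n-1)z^{p(n-1)-1}$ dominates (the other two are smaller by explicit factors of $\ln n$); (b) the leading part of $u^2_{\z}$ has size $\asymp F(n-1)r_n^{p(n-1)}/\Delta r_n$; and (c) $|u^2_z|\asymp(\ln n)|u^1_z|$, so $\|\mathbf{u}_z\|^2\asymp|u^2_z|^2$. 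Combined, these yield $|q_{22}(z_n^*)|\asymp r_n/(\Delta r_n\,p(n-1))$ and therefore
\[
\frac{|q_{22}(z_n^*)|}{\Delta r_n}\;\asymp\;\frac{r_n}{(\Delta r_n)^2\,p(n-1)}\;\asymp\;(\ln n)^3,
\]
which is unbounded. The principal technical obstacle is this bookkeeping; in particular one must verify that the subtraction $F(n-1)z^{p(n-1)}-F(n+1)z^{p(n+1)}$ in $u^2_{\z}$ does not accidentally cancel at $z_n^*$ (the ratio of its two terms is of order $(\ln n)^2$, well away from $1$).
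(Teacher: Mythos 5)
Your treatment of the first three bullets is essentially the paper's: smoothness from nonvanishing of $\mathbf{u}_z$ away from $0$, infinite-order vanishing from $|q_{ij}|\le\|\mathbf{u}_{\z}\|/\|\mathbf{u}_z\|$, and the partial derivatives at $0$ from the flatness via difference quotients.

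For the fourth bullet you take a genuinely different and more elementary route than the paper, and it works. The paper proves non-Lipschitzness by directly computing $\frac{\partial}{\partial\z}q_{22}$ at the midradius points $x_n$, expanding into the three terms of (\ref{eq34}), handling the cancellation of the $n^4$ quantities in $u^2_{zz}-(n^2-1)u^2_z/z$, and proving the dominant term grows like $(\ln(n+2))^3$; this requires second-derivative estimates for $\mathbf{u}$ and a careful accounting of which terms dominate. You instead observe the structural fact that $q_{22}\equiv 0$ on the odd annuli (since $u^2_{\z}\equiv0$ there) and on the bounding circles of the even annuli (since $s'(0)=s'(1)=0$), so a Lipschitz bound with constant $L$ would force $|q_{22}(x_n)|\le\frac{L}{2}\Delta r_n$ by comparing $x_n$ with the nearest point on a bounding circle. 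You then need only the first-derivative estimates already in Example~\ref{ex2.3}: the two-sided bounds on $|u^2_{\z}|/|u^1_z|$ and $|u^2_z|/|u^1_z|$ at $x_n$ (these are exactly the paper's displays (\ref{eq-3}), (\ref{eq26}), (\ref{eq27}), (\ref{eq31})), plus the observation that at $x_n$ the two terms of $u^2_{\z}$ have ratio $\asymp(\ln n)^2$ so no cancellation occurs. These give $|q_{22}(x_n)|\asymp\ln(n+2)/n$ and then, since $\Delta r_n\asymp 1/(n(\ln(n+2))^2)$, $|q_{22}(x_n)|/\Delta r_n\asymp(\ln(n+2))^3\to\infty$. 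The $(\ln n)^3$ rate matches the paper's derivative bound, which is no surprise: a derivative near $x_n$ should be of order $|q_{22}(x_n)|/\Delta r_n$. Two small remarks: your item (a) is actually stronger than stated -- at $x_n$ the cutoff-derivative contribution to $u^2_z$ is smaller than the holomorphic term by a factor $\asymp(\ln n)/n$, not merely $1/\ln n$; and when you write up the details you should display the lower bound $|u^2_{\z}(x_n)|/|u^1_z(x_n)|\ge c(\ln(n+2))^2/n$ explicitly, since the Lipschitz contradiction needs a lower bound on $|q_{22}(x_n)|$, not just the upper bound that the overall ratio estimate $\|\mathbf{u}_{\z}\|/\|\mathbf{u}_z\|\le C_4(\ln(n+2))^2/n$ supplies.
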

\begin{proof}
  The $\cc^\infty$ claim follows from the smoothness of ${\bf u}$ on
  $D_{r_1}$ and the nonvanishing of ${\bf u}_z$ for $z\ne0$.

The sum $\displaystyle{|q_{11}|^2+|q_{12}|^2+|q_{21}|^2+|q_{22}|^2}$
is exactly $\displaystyle{\frac{\|{\bf u}_{\z}\|^2}{\|{\bf
      u}_z\|^2}}$, which vanishes to infinite order as $z\to0$ for
${\bf u}$ as in Example \ref{ex2.3}.  It follows that each $q_{ij}$
also vanishes to infinite order, which implies ${\bf Q}$ and the
entries $q_{ij}$ are continuous at the origin, with the previously
assigned values $q_{ij}(0)=0$.  The flatness also implies the
existence of all directional derivatives at the origin of $\co=\re^2$;
for the $x$ direction,
  $$\left.\frac{\partial}{\partial
  x}q_{ij}\right]_{(x,y)=(0,0)}=\lim_{x\to0}\frac{q_{ij}(x,0)-q_{ij}(0,0)}{x}=0.$$

  The last claim takes up the rest of the Proof; the plan is to show
  there is a sequence of points $x_n\in\co$ approaching $0$ so that
  $\displaystyle{\left.\frac{\partial}{\partial\z}\frac{u^2_{\z}\overline{u^2_z}}{\|{\bf
        u}_z\|^2}\right]_{z=x_n}}$ is an unbounded sequence.  If
    $q_{22}$ had a Lipschitz property on $D_r$
    ($|q_{22}(z_1)-q_{22}(z_2)|\le K|z_1-z_2|$ for some $K$ and all
    $z_1$, $z_2$), then its derivatives would be bounded; the
    unboundedness of the derivative also directly shows
    $q_{22}\notin\cc^1(D_r)$.

    It is enough, and simpler, to consider only $n$ which are even and
    sufficiently large.  This will involve some estimates for
    derivatives that are more precise than (\ref{eq8}).

  We choose the sequence $x_n=r_{n+1}+\frac12\Delta r_n+0i\in A_n$;
  then by construction of $s$ and $\chi_n$, $\chi_n(x_n)=\frac12$, and
  (\ref{eq0}) gives
  $\frac{\partial\chi_n}{\partial\z}(x_n)=\frac{\partial\chi_n}{\partial
  z}(x_n)=\frac1{\Delta r_n}$.  From (\ref{eq-2}),
  $$\frac{\partial^2\chi_n}{\partial
  z\partial\z}(x_n)=-\frac{\partial^2\chi_n}{\partial
  z^2}(x_n)=-\frac{\partial^2\chi_n}{\partial\z^2}(x_n)=\frac{1}{2x_n\Delta
  r_n}$$ (this is where we use the $s^{\prime\prime}(\frac12)=0$
  assumption, to simplify the calculation).

  For $r_n$ as in Example \ref{ex2.3},
  $\frac1{\ln(n+2)}<x_n<\frac1{\ln(n+1)}$, and $\Delta
  r_n=\frac1{\ln(n+1)}-\frac1{\ln(n+2)}$ satisfies
  $$0<c_6n(\ln(n+2))^2<\frac1{\Delta r_n}<C_6n(\ln(n+2))^2.$$

  In the following expression,
  \begin{eqnarray}
    &&\frac{\partial}{\partial\z}\frac{u^2_{\z}\overline{u^2_z}}{\|{\bf
  u}_z\|^2}=\frac{u_{\z\z}^2\overline{u_z^2}}{\|{\bf
  u}_z\|^2}+\frac{u_{\z}^2\overline{u_{zz}^2}}{\|{\bf
  u}_z\|^2}-\frac{u_{\z}^2\overline{u_z^2}\frac{\partial}{\partial\z}(u_z^1\overline{u_z^1}+u_z^2\overline{u_z^2})}{\|{\bf
  u}_z\|^4}\nonumber\\ &=&\frac{u_{\z\z}^2\overline{u_z^2}}{\|{\bf
  u}_z\|^2}+\frac{u_{\z}^2\overline{u_{zz}^2}(u_z^1\overline{u_z^1}+u_z^2\overline{u_z^2})-u_{\z}^2\overline{u_z^2}(u_z^1\overline{u_{zz}^1}+u_{z\z}^2\overline{u_z^2}+u_z^2\overline{u_{zz}^2})}{\|{\bf
  u}_z\|^4},\label{eq30}
  \end{eqnarray}
  the terms with the largest magnitude are the ones involving the
  second $z$-derivatives, $u_{zz}^1$ and $u_{zz}^2$.  Evaluated at
  points in the sequence $x_n$, these terms individually grow at least
  as fast as some constant multiple of $n$.  However, due to some
  cancellations, the overall growth rate turns out to be less than
  $n$; the Theorem will be proved by showing one of the terms is
  unbounded and the remaining terms have a slower rate of growth.

  The first cancellation is that the second and last terms in the
  numerator of the second fraction in (\ref{eq30}) are exactly
  opposites.  This leaves two terms with $zz$-derivatives; using the
  power rule on $u^1=F(n)z^{n^2}$ in the interior of $A_n$ gives:
  $$u_{zz}^1=u_z^1\cdot\frac{n^2-1}{z}.$$ (\ref{eq30}) becomes:
  \begin{equation}\label{eq34}
    \frac{u_{\z\z}^2\overline{u_z^2}}{\|{\bf
  u}_z\|^2}+\frac{u_{\z}^2u_z^1\overline{u_z^1}\left(\overline{u_{zz}^2-\frac{(n^2-1)u_z^2}{z}}\right)}{\|{\bf
  u}_z\|^4}-\frac{u_{\z}^2\overline{u_z^2}u_{z\z}^2\overline{u_z^2}}{\|{\bf
  u}_z\|^4}.
  \end{equation}
  We will show that the last of the three terms in (\ref{eq34}) is the
  dominant one.

  First, consider the ratio:
  \begin{eqnarray}
    &&|u_z^2|/|u_z^1|\nonumber\\
    &=&\left|F(n-1)(\frac{\partial\chi_n}{\partial
    z}z^{(n-1)^2}+\chi_n\cdot(n-1)^2z^{(n-1)^2-1})\right.\label{eq28}\\
    &&\ +\left.F(n+1)(-\frac{\partial\chi_n}{\partial
    z}z^{(n+1)^2}+(1-\chi_n)(n+1)^2z^{n^2+2n})\right|/\left|F(n)n^2z^{n^2-1}\right|.\nonumber
  \end{eqnarray}
  The following calculation for $z\in A_n$ is similar to that for the
  estimate (\ref{eq-3}).
  \begin{eqnarray*}
    \frac{|u_z^2|}{|u_z^1|}&\le&\frac{F(n-1)\left[\left|\frac{\partial\chi_n}{\partial
    z}\right||z|^{(n-1)^2}+\chi_n\cdot(n-1)^2|z|^{n^2-2n}\right]}{F(n)n^2|z|^{n^2-1}}\\\
    &&\ +\frac{F(n+1)\left[\left|\frac{\partial\chi_n}{\partial
    z}\right||z|^{(n+1)^2}+(1-\chi_n)(n+1)^2|z|^{n^2+2n}\right]}{F(n)n^2|z|^{n^2-1}}
  \end{eqnarray*}
  From Example \ref{ex2.3}, recalling $F(n)=(\ln(n+2))^{2n-2}F(n-1)$,
  $\frac1{\ln(n+2)}\le|z|\le\frac1{\ln(n+1)}$, and
  $\left|\frac{\partial\chi_n}{\partial
  z}\right|\le\frac{m_{01}}{\Delta r_n}$, it follows that there is
  some constant $C_7>1$ so that
  \begin{equation}\label{eq27}
    \frac{|u_z^2|}{|u_z^1|}\le C_7\ln(n+2).
  \end{equation}

  To estimate the denominators of (\ref{eq34}),
  \begin{eqnarray*}
    \frac{\|{\bf
    u}_z\|^2}{|u_z^1|^2}&=&1+\left(\frac{|u_z^2|}{|u_z^1|}\right)^2\le1+\left(C_7\ln(n+2)\right)^2\\
  \implies\|{\bf u}_z\|^2&\le&C_8(\ln(n+2))^2|u_z^1|^2.
  \end{eqnarray*}
  So we get a lower bound for the third term in (\ref{eq34}),
  \begin{eqnarray}
    \frac{\left|u_{\z}^2\overline{u_z^2}u_{z\z}^2\overline{u_z^2}\right|}{\|{\bf
    u}_z\|^4}&\ge&\frac{\left|u_{\z}^2\right|\left|u_z^2\right|^2\left|u_{z\z}^2\right|}{C_8^2(\ln(n+2))^4|u_z^1|^4},\label{eq25}
  \end{eqnarray}
  and consider (\ref{eq25}) one factor at a time.
  \begin{eqnarray}
  \frac{|u_{\z}^2|}{|u_z^1|}&=&\frac{\left|\frac{\partial\chi_n}{\partial\z}\cdot F(n-1)z^{(n-1)^2}-\frac{\partial\chi_n}{\partial\z}\cdot F(n+1)z^{(n+1)^2}\right|}{\left|F(n)n^2z^{n^2-1}\right|}\label{eq29}\\
  \left.\frac{|u_{\z}^2|}{|u_z^1|}\right]_{z=x_n}&=&\frac1{\Delta r_n}\left|\frac1{(\ln(n+2))^{2n-2}n^2x_n^{2n-2}}-\frac{(\ln(n+3))^{2n}x_n^{2n+2}}{n^2}\right|\nonumber\\
  &\ge&c_6n(\ln(n+2))^2\left(\frac{(\ln(n+1))^{2n-2}}{(\ln(n+2))^{2n-2}n^2}-\frac{(\ln(n+3))^{2n}}{n^2(\ln(n+1))^{2n+2}}\right)\nonumber\\
  &\ge&\frac{c_4(\ln(n+2))^2}n.\label{eq26}
  \end{eqnarray}
  This lower bound (\ref{eq26}) is comparable to the upper bound
  (\ref{eq-3}), which is used in the next step to find a lower bound
  for $\frac{|u_z^2|}{|u_z^1|}$ (\ref{eq28}).  Note that the
  expression (\ref{eq28}) has two terms which, using the equality of
  $\frac{\partial\chi_n}{\partial\z}$ and
  $\frac{\partial\chi_n}{\partial z}$ when evaluated at $x_n$, match
  two of the terms from $u_{\z}^2$ in (\ref{eq29}):
  \begin{eqnarray}
  \left.\frac{|u_{z}^2|}{|u_z^1|}\right]_{z=x_n}&=&\left|\left.\frac{u_{\z}^2}{u_z^1}\right]_{x_n}+\frac{(n-1)^2}{2(\ln(n+2))^{2n-2}n^2x_n^{2n-1}}\right.\nonumber\\
  &&\
  \left.+\frac{(\ln(n+3))^{2n}(n+1)^2x_n^{2n+1}}{2n^2}\right|\nonumber\\
  &\ge&\frac{(n-1)^2(\ln(n+1))^{2n-1}}{2(\ln(n+2))^{2n-2}n^2}-\left|\left.\frac{u_{\z}^2}{u_z^1}\right]_{x_n}\right|\nonumber\\
  &&\ -\frac{(\ln(n+3))^{2n}(n+1)^2}{2n^2(\ln(n+1))^{2n+1}}\nonumber\\
  &\ge&c_9\ln(n+2)-C_4\frac{(\ln(n+2))^2}n-C_{10}\frac1{\ln(n+1)}\nonumber\\
  &\ge&c_7\ln(n+2).\label{eq31}
  \end{eqnarray}
This lower bound is comparable to the upper bound (\ref{eq27}).  The
remaining factor from (\ref{eq25}) involves second derivatives:
\begin{eqnarray*}
  &&|u_{z\z}^2|/|u_z^1|\\
  &=&\left|F(n-1)\left(\frac{\partial^2\chi_n}{\partial
  z\partial\z}\cdot
  z^{(n-1)^2}+2\frac{\partial\chi_n}{\partial\z}\cdot(n-1)^2z^{n^2-2n}\right)\right.\\
  &&\ \left.-F(n+1)\left(\frac{\partial^2\chi_n}{\partial
  z\partial\z}\cdot
  z^{(n+1)^2}+\frac{\partial\chi_n}{\partial\z}\cdot(n+1)^2z^{n^2+2n}\right)\right|/|F(n)n^2z^{n^2-1}|\\
  &\ge&\frac{F(n-1)}{F(n)n^2}\left(\left|\frac{\partial\chi_n}{\partial\z}\right|(n-1)^2|z|^{-2n+1}-\left|\frac{\partial^2\chi_n}{\partial z\partial\z}\right||z|^{-2n+2}\right)\\
  &&\ -\frac{F(n+1)}{F(n)n^2}\left(\left|\frac{\partial^2\chi_n}{\partial z\partial\z}\right||z|^{2n+2}+\left|\frac{\partial\chi_n}{\partial\z}\right|(n+1)^2|z|^{2n+1}\right).
\end{eqnarray*}
Evaluating at $x_n$ (again, for sufficiently large $n$),
\begin{eqnarray}
  \left.\frac{|u_{z\z}^2|}{|u_z^1|}\right]_{x_n}&\ge&\frac{1}{(\ln(n+2))^{2n-2}n^2}\left(\frac{(n-1)^2}{\Delta r_nx_n^{2n-1}}-\frac{1}{2x_n\Delta r_n}\frac1{x_n^{2n-2}}\right)\nonumber\\
  &&-\frac{(\ln(n+3))^{2n}}{n^2}\left(\frac{x_n^{2n+2}}{2x_n\Delta r_n}+\frac1{\Delta r_n}(n+1)^2x_n^{2n+1}\right)\nonumber\\
  &\ge&\frac{1}{(\ln(n+2))^{2n-2}n^2}\left(c_6n(\ln(n+2))^2(n-1)^2(\ln(n+1))^{2n-1}\right.\nonumber\\
  &&\ \ \ \ \left.-\frac12C_6n(\ln(n+2))^2(\ln(n+2))^{2n-1}\right)\nonumber\\
  &&-\frac{(\ln(n+3))^{2n}C_6(\ln(n+2))^2}{n^2}\left(\frac1{2(\ln(n+1))^{2n+1}}\right.\nonumber\\
  &&\ \ \ \ \left.+\frac{(n+1)^2}{(\ln(n+1))^{2n+1}}\right)\nonumber\\
  &\ge&c_{11}n(\ln(n+2))^3.\label{eq33}
\end{eqnarray}
So, the term from (\ref{eq25}) is bounded below by a product including
factors from (\ref{eq26}), (\ref{eq31}), and (\ref{eq33}):
  \begin{eqnarray}
    \left.\frac{\left|u_{\z}^2(\overline{u_z^2})^2u_{z\z}^2\right|}{\|{\bf
    u}_z\|^4}\right]_{x_n}&\ge&\frac{c_4(\ln(n+2))^2(c_7\ln(n+2))^2c_{11}n(\ln(n+2))^3}{nC_8^2(\ln(n+2))^4}\nonumber\\
  &\ge&c_{12}(\ln(n+2))^3.\label{eq32}
  \end{eqnarray}

From the second term in (\ref{eq34}), we consider the following quantity:
\begin{eqnarray*}
  &&u_{zz}^2-\frac{(n^2-1)u_z^2}{z}\\
  &=&F(n-1)\left(\frac{\partial^2\chi_n}{\partial
  z^2}z^{(n-1)^2}+2\frac{\partial\chi_n}{\partial
  z}(n-1)^2z^{n^2-2n}\right.\\ &&\
  \left.+\chi_n\cdot(n-1)^2(n^2-2n)z^{n^2-2n-1}\right)\\
  &&+F(n+1)\left(-\frac{\partial^2\chi_n}{\partial
  z^2}z^{(n+1)^2}-2\frac{\partial\chi_n}{\partial
  z}(n+1)^2z^{n^2+2n}\right.\\ &&\
  \left.+(1-\chi_n)\cdot(n+1)^2(n^2+2n)z^{n^2+2n-1}\right)\\
  &&-F(n-1)(n^2-1)\left(\frac{\partial\chi_n}{\partial
  z}z^{n^2-2n}+\chi_n\cdot(n-1)^2z^{n^2-2n-1}\right)\\
  &&-F(n+1)(n^2-1)\left(-\frac{\partial\chi_n}{\partial
  z}z^{n^2+2n}+(1-\chi_n)\cdot(n+1)^2z^{n^2+2n-1}\right).
\end{eqnarray*}
The cancellation of the $n^4$ quantities is the key step.  The ratio
\begin{eqnarray*}
  &&\left|u_{zz}^2-\frac{(n^2-1)u_z^2}{z}\right|/|u_z^1|\\
  &=&\left|F(n-1)\left(\frac{\partial^2\chi_n}{\partial z^2}z^{(n-1)^2}+\frac{\partial\chi_n}{\partial z}(n^2-4n+3)z^{n^2-2n}\right.\right.\\
  &&\ \left.-\chi_n\cdot(n-1)^2(2n-1)z^{n^2-2n-1}\right)\\
  &&+F(n+1)\left(\frac{\partial^2\chi_n}{\partial z^2}z^{(n+1)^2}-\frac{\partial\chi_n}{\partial z}(n^2+4n+3)z^{n^2+2n}\right.\\
  &&\ \left.\left.+(1-\chi_n)(n+1)^2(2n+1)z^{n^2+2n-1}\right)\right|/\left|F(n)n^2z^{n^2-1}\right|
\end{eqnarray*}
has an upper bound on the $x_n$ sequence:
\begin{eqnarray}
  &&\left.\frac{\left|u_{zz}^2-\frac{(n^2-1)u_z^2}{z}\right|}{|u_z^1|}\right]_{x_n}\nonumber\\
  &\le&\frac1{(\ln(n+2))^{2n-2}n^2}\left(\frac1{2x_n\Delta r_nx_n^{2n-2}}+\frac{n^2-4n+3}{\Delta r_nx_n^{2n-1}}+\frac{(n-1)^2(2n-1)}{2x_n^{2n}}\right)\nonumber\\
  &&+\frac{(\ln(n+3))^{2n}}{n^2}\left(\frac{x_n^{2n+2}}{2x_n\Delta r_n}+\frac{(n^2+4n+3)x_n^{2n+1}}{\Delta r_n}+\frac{(n+1)^2(2n+1)x_n^{2n}}2\right)\nonumber\\
  &\le&\frac1{(\ln(n+2))^{2n-2}n^2}\left(C_6n(\ln(n+2))^2(\frac12+n^2-4n+3)(\ln(n+2))^{2n-1}\right.\nonumber\\
  &&\ \left.+\frac12(n-1)^2(2n-1)(\ln(n+2))^{2n}\right)\nonumber\\
  &&+\frac{(\ln(n+3))^{2n}}{n^2}\left(C_6n(\ln(n+2))^2(\frac12+n^2-4n+3)\frac1{(\ln(n+1))^{2n+1}}\right.\nonumber\\
  &&\ \left.+\frac{(n+1)^2(2n+1)}{(\ln(n+1))^{2n}}\right)\nonumber\\
  &\le&C_{13}n(\ln(n+2))^3.\label{eq36}
\end{eqnarray}
The middle term from (\ref{eq34}), evaluated at points $x_n$, has
factors bounded by (\ref{eq-3}), (\ref{eq31}), and (\ref{eq36}):
  \begin{eqnarray}
    &&\left.\left|\frac{u_{\z}^2u_z^1\overline{u_z^1}\left(\overline{u_{zz}^2-(n^2-1)u_z^2/z}\right)}{\|{\bf
    u}\|^4}\right|\right]_{x_n}\nonumber\\
    &\le&\left.\frac{|u_{\z}^2||u_z^1|^2\left|u_{zz}^2-(n^2-1)u_z^2/z\right|}{|u_z^2|^4}\right]_{x_n}\nonumber\\
    &=&\left.\frac{|u_{\z}^2|}{|u_z^1|}\right]_{x_n}\left.\frac{|u_z^1|^4}{|u_z^2|^4}\right]_{x_n}\left.\frac{\left|u_{zz}^2-(n^2-1)u_z^2/z\right|}{|u_z^1|}\right]_{x_n}\nonumber\\
    &\le&C_4\frac{(\ln(n+2))^2}{n}\frac{1}{(c_7\ln(n+2))^4}C_{13}n(\ln(n+2))^3\nonumber\\
    &\le&C_{14}\ln(n+2).\label{eq37}
  \end{eqnarray}

The first term from (\ref{eq34}) involves the second $\z$-derivative:
\begin{eqnarray*}
  \frac{|u_{\z\z}^2|}{|u_z^1|}&=&\frac{\left|\frac{\partial^2\chi_n}{\partial\z^2}F(n-1)z^{(n-1)^2}-\frac{\partial^2\chi_n}{\partial\z^2}F(n+1)z^{(n+1)^2}\right|}{\left|F(n)n^2z^{n^2-1}\right|}\\ &\le&\frac{F(n-1)}{F(n)n^2}\left|\frac{\partial^2\chi_n}{\partial\z^2}\right||z|^{-2n+2}+\frac{F(n+1)}{F(n)n^2}\left|\frac{\partial^2\chi_n}{\partial\z^2}\right||z|^{2n+2}.\\ \left.\frac{|u_{\z\z}^2|}{|u_z^1|}\right]_{x_n}&\le&\frac1{2x_n\Delta
      r_nn^2}\left(\frac1{(\ln(n+2))^{2n-2}}\cdot\frac1{x_n^{2n-2}}+(\ln(n+3))^{2n}x_n^{2n+2}\right)\\ &\le&\frac{C_6n(\ln(n+2))^2}{2n^2}\left(\frac{(\ln(n+2))^{2n-1}}{(\ln(n+2))^{2n-2}}+\frac{(\ln(n+3))^{2n}}{(\ln(n+1))^{2n+1}}\right)\\ &\le&C_{15}\frac{(\ln(n+2))^3}{n}.
\end{eqnarray*}
The first term from (\ref{eq34}) also approaches $0$ for large $n$:
\begin{eqnarray*}
  \left.\frac{|u_{\z\z}^2\overline{u_z^2}|}{\|{\bf
      u}_z\|^2}\right]_{x_n}&\le&\left.\frac{|u_{\z\z}^2|}{|u_z^1|}\right]_{x_n}\left.\frac{|u_z^2|}{\|{\bf
          u}_z\|}\right]_{x_n}\le C_{15}\frac{(\ln(n+2))^3}{n}.
\end{eqnarray*}
The conclusion from (\ref{eq30}) and (\ref{eq34}) is:
 \begin{eqnarray*}
    \left.\left|\frac{\partial}{\partial\z}\frac{u^2_{\z}\overline{u^2_z}}{\|{\bf
  u}_z\|^2}\right|\right]_{x_n}&\ge&c_{12}(\ln(n+2))^3-C_{14}\ln(n+2)-C_{15}\frac{(\ln(n+2))^3}{n}\\
    &>&\frac{c_{16}}{x_n^3}.
  \end{eqnarray*}
\end{proof}

\section{Remarks and Questions}\label{sec4}

\begin{rem}
  The regularity of the coefficients is an important consideration in
  the analysis of unique continuation properties for some PDEs (for
  example, \cite{lnw} for strong UCP, and \cite{ivv} for weak UCP),
  which is why we presented the detailed Proof of Theorem
  \ref{thm4.1}.  However, we do not yet understand the sharpness of
  Example \ref{ex2.3} and Theorem \ref{thm4.1} for this particular
  unique continuation problem; would improved regularity of ${\bf
    Q}(z)$ (in addition to the flatness property) imply a strong
  unique continuation property, or, oppositely, is there some
  counterexample where ${\bf Q}(z)$ is smooth?
\end{rem}

\begin{rem}
  \cite{rosay} shows how Example \ref{ex2.1} can be modified so that
  the origin is a non-isolated zero of ${\bf u}$; it is a matter of
  replacing quantities $z^N$ in (\ref{eq38}), (\ref{eq39}) by
  $z^{N-1}(z-a_n)$ for a sequence $a_n$ and re-working the cutoff
  functions $\chi_n$.  Our Example \ref{ex2.3} can be modified in an
  analogous way but we have not worked out all the details.
\end{rem}

\begin{rem}
  By a construction analogous to (\ref{eq22}), the function ${\bf u}$
  from Example \ref{ex2.3} also satisfies a real linear, elliptic
  equation of the form ${\bf u}_{\z}=\widetilde{\bf
    Q}_{2\times2}\overline{{\bf u}_z}$.  $\widetilde{{\bf Q}}(z)$ is
  not the same as ${\bf Q}(z)$ but also has entries vanishing to
  infinite order.
\end{rem}

\begin{rem}
  Another differential inequality, considered by \cite{rosay}, is
  $\|{\bf u}_{\z}\|\le K\|{\bf u}\|^\alpha\|{\bf u}_z\|$, for
  $0<\alpha<1$.  Our attempts to use the construction of Section
  \ref{sec1} to find smooth functions ${\bf u}$ satisfying the
  inequality and vanishing to infinite order at an isolated zero have
  not yet met any success.  \cite{rosay} proves a weak unique
  continuation property for $\alpha=\frac12$, but the strong property
  remains an open question.
\end{rem}

\end{document}